\theoremstyle{plain}
\newtheorem{theorem}{Theorem}[section]
\newtheorem{lemma}[theorem]{Lemma}
\newtheorem{corollary}[theorem]{Corollary}
\theoremstyle{definition}
\newtheorem{example}[theorem]{Example}
\DeclareMathOperator{\dimh}{dim_H}
\title[mass transference principle and sets with large intersections]{A mass transference principle and sets with large intersections}
\author{Tomas Persson}
\address{T.~Persson, Centre for Mathematical Sciences, Lund
  University, Box~118, 221~00~Lund, Sweden}
\email{tomasp@maths.lth.se}
\date{\today}
\subjclass[2010]{28A78, 28A80}
\thanks{I thank Henna Koivusalo for interesting discussions, as
  well as Bao-Wei Wang for his useful comments and kind
  hospitality. I am also grateful to Demi Allen for her comments,
  and for telling me about the paper by S.~Jaffard
  \cite{jaffard}, which was also communicated to me by Henna
  Koivusalo. Finally, I thank a referee for putting my attention
  to the papers by J.-M.~Aubry and S.~Jaffard \cite{aubryjaffard}
  and by A.~Durand \cite{durand1, durand3}.}
\begin{document}

\begin{abstract}
  I prove a mass transference principle for general shapes,
  similar to a recent result by H.~Koivusalo and M.~Rams. The
  proof relies on Vitali's covering lemma and manipulations with
  Riesz energies.

  The main novelty is that it is proved that the obtained
  limsup-set belongs to the classes of sets with large
  intersections, as defined by K.~Falconer. This has previously
  not been proved for as general shapes as in this paper.
\end{abstract}

\maketitle

\section{The mass transference principle}

The mass transference principle is an important tool in the
metric theory of numbers, and in other areas, when one is
interested in the Hausdorff dimension of the superior limit of a
sequence of sets, that is sets of the form
\[
E = \limsup_{j \to \infty} E_j = \bigcap_{k=1}^\infty \bigcup_{j
  = k}^\infty E_j.
\]

What was first called the mass transference principle was proved
by V.~Beresnevich and S.~Velani \cite{beresnevichvelani}. In a
simplified form, it is the following statement. Suppose that $B
(x_j, r_j)$ is a sequence of balls in $\mathbbm{R}^d$ such that
the Lebesgue measure $\lambda$ of $E = \limsup_{j \to \infty} B
(x_j, r_j)$ is full, i.e.\ $\lambda (\mathbbm{R}^d \setminus E) =
0$. Let $s \leq d$ and consider the set $E_s = \limsup_{j \to
  \infty} B (x_j, r_j^{d/s}) \subset E$, where the original balls
$B(x_j,r_j)$ are replaced by shrunken balls of radius
$r^{d/s}$. Then, for any ball $B$, the $s$-dimensional Hausdorff
measure of $B \cap E_s$ satisfies
\[
\mathscr{H}^s (B \cap E_s) = \mathscr{H}^s (B).
\]
In particular, $\dimh E_s \geq s$.

Actually, the mass transference principle mentioned above is not the
first result of this type. S.~Jaffard proved a one dimensional mass
transference principle a few years earlier \cite[Second Theorem~1 on
  Page~335, not the one on Page~333]{jaffard}.

The mass transference principles of S.~Jaffard, V.~Beresnevich and
S.~Velani, and other variations are more general than mentioned above,
giving information about Hausdorff measures with more general gauge
functions than $r \mapsto r^t$, $0 < t < d$. The reader if referred to
the papers by S.~Jaffard, and by V.~Beresnevich and S.~Velani for more
details.

Although it is possible to adjust the methods of this paper to
more general gauge functions than $r \mapsto r^t$, we will stay
simple and only consider these basic gauge functions.

There has recently been several interesting extensions and
variations of the mass transference principle, and this paper
shall not be the last. A recent survey article is the paper by
D.~Allen and S.~Troscheit \cite{allentroscheit}.

B.-W.~Wang, J.~Wu and J.~Xu \cite{wangwuxu} replaced the balls in
the mass transference principle by rectangles. A further
development in this direction is a recent result by B.-W.~Wang
and J.~Wu \cite{wangwu}.

D.~Allen and V.~Beresnevich \cite{allenberesnevich} proved a mass
transference principle for shrinking neighbourhoods of
$l$-dimensional subspaces. This was further developed by D.~Allen
and S.~Baker \cite{allenbaker}, who proved a mass transference
principle for shrinking neighbourhoods of sets of much more
general form.

In a recent paper, H.~Koivusalo and M.~Rams \cite{koivusalorams},
considered a set $E = \limsup_{j \to \infty} B (x_j,r_j)$, but
instead of shrinking the balls $B (x_j, r_j)$, they replaced them
by open subsets $U_j \subset B(x_j, r_j)$, and obtained a lower
bound on the Hausdorff dimension of the set $E_U = \limsup_{j \to
  \infty} U_j$.  The goal of this paper is to prove a similar
result, using different methods. In short, we prove a result of
the following form, see Theorem~\ref{the:main} for
details. Suppose that $\limsup B(x_j, r_j)$ has full Lebesgue
measure, and let $U_j \subset B(x_j, r_j)$ be open sets. Then the
Hausdorff dimension of $\limsup U_j$ can be estimated from below
by an expression involving the Lebesgue measures and Riesz
energies with respect to Lebesgue measure of the sets $U_j$.

The lower bound on the Hausdorff dimension obtained in this paper
is not the same as the one obtained by H.~Koivusalo and M.~Rams.
The bound on the dimension obtained in this paper is never better
than that obtained by H.~Koivusalo and M.~Rams, but they coincide
in some natural cases, for instance when the open sets $U_j$ are
balls or ellipsoids.

The method used in this paper also proves that the set $\limsup_{j \to
  \infty} U_j$ has a large intersection property, as introduced by
K.~Falconer \cite{falconer1, falconer2}. This implies among other
things that the Hausdorff dimension of countable intersections of sets
of the form $\limsup_{j \to \infty} U_j$ is the minimum of the
Hausdorff dimensions of the sets intersected. This intersection
property has already been proved in the case when $U_j$ are balls by
J.-M.~Aubry and S.~Jaffard \cite{aubryjaffard}.  The intersection
property has also been proved in a similar setting by A.~Durand
\cite{durand1, durand2, durand3}, for so called ubiquitous systems
when the sets forming the limsup-set are balls. The results of
A.~Durand also cover Hausdorff measures with more general gauge
functions. However, the result of this paper, that the set $\limsup_{j
  \to \infty} U_j$ has a large intersection property for more general
shapes than balls, seems new.

The paper is organised as follows. In the next section, we give
some background on sets with large intersections, and we state a
lemma which, together with Vitali's covering lemma, is the main
tool in this paper. In Section~\ref{sec:results}, we state the
mass transference principle obtained in this paper, and give some
corollaries. The proof of the mass transference principle is in
Section~\ref{sec:proof}, and in the Appendix, the above mentioned
lemma is proved.

\section{Sets with large intersections}

The classes of sets with large intersections were defined by
K.~Falconer \cite{falconer1, falconer2}. See also the paper by
Y.~Bugeaud \cite{bugeaud}.

A dyadic cube $D$ is a set of the form
\[
D = [ k_1 2^{-n}, (k_1 + 1) 2^{-n}) \times [ k_2 2^{-n}, (k_2 +
    1) 2^{-n}) \times \ldots \times [ k_d 2^{-n}, (k_d + 1)
      2^{-n})
\]
where $k_1, k_2, \ldots, k_d$ and $n$ are integers. We define the
outer content
\[
\mathscr{M}_\infty^s (E) = \inf \biggl\{\, \sum_k |D_k|^s : E
\subset \bigcup_{k} D_k, \ D_k \text{ are dyadic cubes.} \,
\biggr\}.
\]
The $s$-dimensional class of sets with large intersections is
denoted by $\mathscr{G}^s$ and can be defined as
\begin{multline*}
  \mathscr{G}^s = \bigl\{\, E \subset \mathbbm{R}^d : E \text{ is
    a $G_\delta$-set with } \mathscr{M}_\infty^t (E \cap D) \geq
  |D|^t \\ \text{for all dyadic cubes } D \text{ and for all } t
  < s \, \bigr\}.
\end{multline*}
For simplicity, we shall work in the $d$ dimensional torus
$\mathbbm{T}^d = \mathbbm{R}^d / \mathbbm{Z}^d$ instead of
$\mathbbm{R}^d$. We define
\begin{multline*}
  \mathscr{G}^s (\mathbbm{T}^d) = \bigl\{\, E \subset
  \mathbbm{T}^d : E \text{ is a $G_\delta$-set with }
  \mathscr{M}_\infty^t (E \cap D) \geq |D|^t \\ \text{for all
    dyadic cubes } D \text{ and for all } t < s \, \bigr\}.
\end{multline*}
Equivalently we may define $\mathscr{G}^s (\mathbbm{T}^d)$ to be
the family of sets $E$ such that $\pi^{-1} (E) \in
\mathscr{G}^s$, where $\pi \colon \mathbbm{R}^d \to
\mathbbm{T}^d$ is the projection $\pi (x) = x \mod 1$.

There are several other ways to define $\mathscr{G}^s$, see the
papers by K.~Falconer \cite{falconer1, falconer2}. Most
properties of $\mathscr{G}^s$ are easily carried over to the
corresponding statements for $\mathscr{G}^s (\mathbbm{T}^d)$, for
instance through the projection $\pi$ mentioned above.

It is immediately clear from the definition of $\mathscr{G}^s
(\mathbbm{T}^d)$ above that whenever $E \in \mathscr{G}^s
(\mathbbm{T}^d)$, then $\dimh E \geq s$. The class $\mathscr{G}^s
(\mathbbm{T}^d)$ also has the property that if $E_k \in
\mathscr{G}^s (\mathbbm{T}^d)$ for $k = 1, 2, 3, \ldots$, then
\[
\bigcap_k E_k \in \mathscr{G}^s (\mathbbm{T}^d).
\]
For this and other properties of $\mathscr{G}^s (\mathbbm{T}^d)$,
the reader is referred again to the papers by K.~Falconer
\cite{falconer1, falconer2}.

The following lemma will be important for the proof of the main
result. It is a slight variation of a lemma in
\cite{perssonreeve}. We give a proof in the Appendix.

\begin{lemma} \label{lem:frostman}
  Let $E_n$ be open sets in $\mathbbm{T}^d$ and let $\mu_n$ be
  measures with $\mu_n (\mathbbm{T}^d \setminus E_n) = 0$ and
  such that $\mu_n$ is absolutely continuous with respect to
  Lebesgue measure.

  If there is a constant $C$ such that
  \begin{equation} \label{eq:densitybounds}
    C^{-1} \leq \liminf_{n \to \infty} \frac{\mu_n (B)}{\lambda
      (B)} \leq \limsup_{n \to \infty} \frac{\mu_n (B)}{\lambda
      (B)} \leq C
  \end{equation}
  for any ball $B$, and
  \begin{equation} \label{eq:energybound}
    \iint |x-y|^{-s} \, \mathrm{d} \mu_n (x) \mathrm{d} \mu_n (y)
    < C
  \end{equation}
  for all $n$, then $\limsup\limits_{n \to \infty} E_n \in
  \mathscr{G}^s (\mathbbm{T}^d)$, and in particular $\dimh
  \limsup\limits_{n \to \infty} E_n \geq s$.
\end{lemma}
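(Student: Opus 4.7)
The plan is to prove that for every dyadic cube $D \subset \mathbbm{T}^d$ and every $t < s$, we have $\mathscr{M}_\infty^t(\limsup_{n \to \infty} E_n \cap D) \gtrsim |D|^t$, with an implicit constant depending only on $C$, $s - t$ and $d$. Since each $E_n$ is open, $\limsup_{n \to \infty} E_n$ is automatically a $G_\delta$-set, and the uniform content bound above, combined with the equivalent characterisations of $\mathscr{G}^s(\mathbbm{T}^d)$ in \cite{falconer1, falconer2}, then places $\limsup_{n \to \infty} E_n$ in $\mathscr{G}^s(\mathbbm{T}^d)$.

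The first step is to convert the Riesz energy hypothesis \eqref{eq:energybound} into a usable Frostman bound. Writing $V_n(x) = \int |x - y|^{-t}\,\mathrm{d}\mu_n(y)$ for the $t$-potential (and observing that finiteness of the $s$-energy on the bounded torus yields a uniform $t$-energy for every $t < s$), Chebyshev's inequality applied to $\int V_n\,\mathrm{d}\mu_n$ produces a subset $F_n \subset E_n$ with $\mu_n(F_n) \geq \tfrac{1}{2}\mu_n(\mathbbm{T}^d)$ on which $V_n$ is uniformly bounded. The pointwise inequality $\mu_n(B(x,r))\,r^{-t} \leq V_n(x)$ then gives $\mu_n(B(x,r)) \leq K r^t$ for all $x \in F_n$ and $r > 0$, with $K$ independent of $n$. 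Combined with the density lower bound in \eqref{eq:densitybounds}, this produces sets $F_n$ that are Frostman at dimension $t$ while still meeting every fixed ball in Lebesgue-comparable mass.

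Next, I would construct a Cantor-type subset $K$ of $\limsup_{n \to \infty} E_n \cap D$ by induction: starting inside $D$, at each stage $k$ I would produce a finite disjoint union $G_k$ of closed balls contained in $G_{k-1}$ and in $E_{n_k}$ for some $n_k \to \infty$. The step $G_k \to G_{k+1}$ is carried out by applying Vitali's covering lemma to closed balls centred on $F_{n_{k+1}} \cap G_k$ and contained in $E_{n_{k+1}} \cap G_k$; the lower density bound guarantees that the selected disjoint subfamily retains a positive proportion of the Lebesgue mass of $G_k$, while the uniform Frostman property on $F_{n_{k+1}}$ transfers to the natural mass distribution $\nu$ supported on $K = \bigcap_k G_k$. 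Shrinking radii at each level ensures $K \subset \limsup_{n \to \infty} E_n \cap D$, and the mass distribution principle applied to $\nu$ then yields a Hausdorff-content bound proportional to $|D|^t$ once $\nu$ is normalised to match the scale of $D$.

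The main technical obstacle will be the inductive bookkeeping in the Cantor construction: one must keep the Frostman constants from deteriorating as the construction proceeds while simultaneously retaining enough total mass, and must scale the construction so that the resulting content bound is proportional to $|D|^t$ rather than merely to $\lambda(D)$. Both requirements depend on a careful joint use of \eqref{eq:densitybounds} and \eqref{eq:energybound}, and striking the balance between them is where the Vitali selection earns its keep.
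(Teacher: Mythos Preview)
Your outline follows the classical Cantor-construction route, which is genuinely different from the paper's argument, and the obstacle you flag at the end is real and unresolved in your proposal. The problem is the sentence ``Combined with the density lower bound in \eqref{eq:densitybounds}, this produces sets $F_n$ that are Frostman at dimension $t$ while still meeting every fixed ball in Lebesgue-comparable mass.'' Chebyshev on the global potential only yields $\mu_n(\mathbbm{T}^d \setminus F_n) \leq I_t(\mu_n)/M$ when $F_n = \{V_n \leq M\}$; this says nothing about $\mu_n(B \setminus F_n)$ for a \emph{small} ball $B$. In the Cantor scheme the level-$k$ balls have radius $\rho_k \to 0$, so to force $\mu_{n_k}(F_{n_k} \cap B) \gtrsim \lambda(B)$ for each of them you would need the threshold $M_k$ of order $\rho_{k-1}^{-d}$, which makes the Frostman constant at level $k$ blow up like $\rho_{k-1}^{-d}$ and destroys the eventual content estimate. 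A repair is possible---one must localise the energy bound to each level-$(k{-}1)$ ball and run Chebyshev there---but that requires exactly the statement of Lemma~\ref{lem:energylimit}, which your proposal never invokes, and even then the inductive bookkeeping is substantial.

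The paper bypasses the Cantor construction entirely. Fixing $D$ and $t<s$, it defines for each large $n$ a measure on $E_n \cap D$ by reweighting $\mu_n$ with the inverse local potential,
\[
\nu_n(A) = \int_{A \cap D} \biggl(\int_D |x-y|^{-t}\,\mathrm{d}\mu_n(x)\biggr)^{-1}\,\mathrm{d}\mu_n(y).
\]
Two applications of Jensen's inequality then give both $\nu_n(U) \leq |U|^t$ for every $U \subset D$ (a Frostman upper bound with constant exactly $1$) and, via Lemma~\ref{lem:energylimit} together with \eqref{eq:densitybounds}, $\nu_n(D) \geq c_0\,|D|^t$. This immediately yields $\mathscr{M}_\infty^t(E_n \cap D) \geq c_0\,|D|^t$ for all large $n$, and Falconer's criterion in \cite{falconer2} (a uniform lower bound on $\liminf_n \mathscr{M}_\infty^t(E_n \cap D)$ over all dyadic $D$ suffices) finishes. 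The inverse-potential trick thus replaces all the inductive bookkeeping you anticipate with two lines of convexity.
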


\section{New (and old) results} \label{sec:results}

Recall that the Lebesgue measure on $\mathbbm{T}^d$ is denoted by
$\lambda$. We define the $t$-dimensional Riesz energy of a set
$U$ by
\[
I_t (U) = \int_U \int_U |x-y|^{-t} \, \mathrm{d} x \mathrm{d} y,
\]
where $0 < t < d$. (As is customary, we write $\mathrm{d}x$
instead of $\mathrm{d} \lambda (x)$.)

\begin{theorem} \label{the:main}
  Let $(B (x_j, r_j))_{j=1}^\infty$ be a sequence of balls in
  $\mathbbm{T}^d$ with $r_j \to 0$, and let $(U_j)_{j=1}^\infty$
  be a sequence of open sets such that $U_j \subset B(x_j,
  r_j)$. Let
  \begin{equation} \label{eq:formulafors}
    s = \sup \biggl\{\, t > 0: \sup_{j} \frac{I_t (U_j) \lambda
      (B(x_j, r_j))}{\lambda (U_j)^2 } < \infty \, \biggr\}.
  \end{equation}

  Suppose that the set
  $E = \limsup_{j \to \infty} B (x_j,r_j)$
  has full Lebesgue measure. Then the set
  \[
  E_U = \limsup_{j \to \infty} U_j
  \]
  satisfies $\dimh E_U \geq s$ and $E_U \in \mathscr{G}^s
  (\mathbbm{T}^d)$.
\end{theorem}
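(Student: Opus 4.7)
The plan is to apply Lemma~\ref{lem:frostman} with a carefully chosen sequence of open sets $E_n$ and absolutely continuous measures $\mu_n$ whose superior limit is contained in $E_U$. For each $n$ I choose a threshold $\epsilon_n \downarrow 0$; since $\limsup_j B(x_j, r_j)$ has full Lebesgue measure and $r_j \to 0$, the family $\{B(x_j, r_j) : j \geq n,\ r_j < \epsilon_n\}$ still covers $\mathbbm{T}^d$ up to a null set and forms a fine Vitali cover. By a Vitali-type extraction, I select a countable subcollection $\{B(x_{j_i}, r_{j_i})\}_{i \in I_n}$ whose triples $3B(x_{j_i}, r_{j_i})$ are pairwise disjoint and whose enlargements $\kappa B(x_{j_i}, r_{j_i})$, for some absolute constant $\kappa$, still cover $\mathbbm{T}^d$ up to a null set. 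I set
\[
E_n = \bigcup_{i \in I_n} U_{j_i}, \qquad \mu_n = \sum_{i \in I_n} \frac{\lambda(B(x_{j_i}, r_{j_i}))}{\lambda(U_{j_i})}\,\lambda|_{U_{j_i}},
\]
so that $\mu_n$ is absolutely continuous, supported on $E_n$, and assigns each $U_{j_i}$ mass $\lambda(B(x_{j_i}, r_{j_i}))$. Since every $j_i \geq n$, one checks directly that $\limsup_n E_n \subset E_U$.

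The density bounds \eqref{eq:densitybounds} follow routinely: for a fixed ball $B$, once $\epsilon_n$ is much smaller than its radius, disjointness of the selected balls gives $\mu_n(B) \leq \lambda(B)(1 + o(1))$, while the enlarged-cover property forces $\mu_n(B) \geq \kappa^{-d}\lambda(B)(1 - o(1))$.

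For the energy bound \eqref{eq:energybound} at each $t < s$, write $c_i = \lambda(B(x_{j_i}, r_{j_i}))/\lambda(U_{j_i})$ and split $I_t(\mu_n)$ into a diagonal and an off-diagonal part. The diagonal $\sum_i c_i^2\, I_t(U_{j_i})$ is, by the very definition of $s$ in \eqref{eq:formulafors}, bounded by $C_t\sum_i \lambda(B(x_{j_i}, r_{j_i})) \leq C_t$ via the disjointness of the balls. The crucial role of the $3$-disjoint selection appears in the off-diagonal part: for any $i \neq i'$ and any $x \in B(x_{j_i}, r_{j_i})$, $y \in B(x_{j_{i'}}, r_{j_{i'}})$, the distance $|x - y|$ is comparable, up to absolute constants, to the distance between the centres. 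Consequently
\[
c_i c_{i'} \iint_{U_{j_i} \times U_{j_{i'}}} |x-y|^{-t}\,\mathrm{d}x\mathrm{d}y \leq K \iint_{B(x_{j_i}, r_{j_i}) \times B(x_{j_{i'}}, r_{j_{i'}})} |x-y|^{-t}\,\mathrm{d}x\mathrm{d}y,
\]
and summing over $i \neq i'$ together with the disjointness of the $B(x_{j_i}, r_{j_i})$'s bounds the off-diagonal part by $K \cdot I_t(\mathbbm{T}^d)$, which is finite since $t < s \leq d$.

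Applying Lemma~\ref{lem:frostman} for each $t < s$ yields $\limsup_n E_n \in \mathscr{G}^t(\mathbbm{T}^d)$. Since $\limsup_n E_n \subset E_U$ and $E_U = \bigcap_k \bigcup_{j \geq k} U_j$ is a $G_\delta$ set, monotonicity of the dyadic outer content $\mathscr{M}_\infty^t$ transfers the defining inequalities to $E_U$, giving $E_U \in \mathscr{G}^s(\mathbbm{T}^d)$ and hence $\dimh E_U \geq s$. The main obstacle that this construction is designed to circumvent is the off-diagonal Riesz energy: a naive disjoint Vitali subcollection may admit nearly tangent balls across which $|x-y|^{-t}$ is singular even though $U_{j_i}$ and $U_{j_{i'}}$ remain disjoint, and the individual energy bounds on the $U_j$ alone are not strong enough to absorb the cross terms in general; the $3$-disjoint modification supplies exactly the uniform geometric separation needed.
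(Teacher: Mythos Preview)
Your proposal is correct and follows essentially the same route as the paper: extract a Vitali subcollection of the balls, transfer the Lebesgue mass of each selected ball uniformly onto its $U_j$, verify the density bounds~\eqref{eq:densitybounds}, split the Riesz energy into diagonal and off-diagonal parts (the diagonal controlled by~\eqref{eq:formulafors} together with $\sum_j \lambda(B_j)\le 1$, the off-diagonal by $I_t(\lambda)$), and apply Lemma~\ref{lem:frostman}. The one technical difference is how the off-diagonal separation is obtained: you select balls whose triples $3B_j$ are disjoint, whereas the paper keeps the standard Vitali selection but first doubles all radii so that $U_j \subset B(x_j, c r_j)$ with $c<1$, achieving the same comparability $d(U_j,U_k)\gtrsim |x_j-x_k|$ by a different device.
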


It is now timely to make a comparison between
Theorem~\ref{the:main} and the result of H.~Koivusalo and
M.~Rams. Let $t > 0$. Consider the set $U_j$ and define a measure
$\eta$ by
\[
\eta (A) = \int_{A \cap U_j} \biggl( \int_{U_j} |x - y|^{-t} \,
\mathrm{d} y \biggr)^{-1} \, \mathrm{d} x.
\]
Then by Jensen's inequality
\begin{align*}
  \eta (A) &\leq \int_{A \cap U_j} \biggl( \int_{A \cap U_j} |x -
  y|^{-t} \, \frac{\mathrm{d} y}{\lambda(U_j)} \biggr)^{-1} \,
  \frac{\mathrm{d}x}{\lambda (U_j)} \\ & \leq \int_{A \cap U_j}
  \int_{A \cap U_j} |x - y|^{t} \, \frac{\mathrm{d}
    y}{\lambda(U_j)} \, \frac{\mathrm{d} x}{\lambda (U_j)} \leq
  |A|^t,
\end{align*}
and
\begin{align*}
  \eta (U_j) &= \int_{U_j} \biggl( \int_{U_j} |x - y|^{-t} \,
  \frac{\mathrm{d} y}{\lambda(U_j)} \biggr)^{-1} \,
  \frac{\mathrm{d} x}{\lambda (U_j)} \\ & \geq \biggl( \int_{U_j}
  \int_{U_j} |x-y|^{-t} \, \frac{\mathrm{d} x}{\lambda (U_j)}
  \frac{\mathrm{d} y}{\lambda (U_j)} \biggr)^{-1} = \frac{\lambda
    (U_j)^2}{I_j (U_j)}.
\end{align*}
Now, if $(U_{j,k})_{k=1}^\infty$ is a cover of $U_j$, then
\[
\sum_{k=1}^\infty |U_{j,k}|^t \geq \sum_{k=1}^\infty \eta
(U_{j,k}) \geq \eta (U_j) \geq \frac{\lambda(U_j)^2}{I_t (U_j)},
\]
which shows that $\frac{\lambda (U_j)^2}{I_t (U_j)} \leq
\mathscr{H}_\infty^t (U_j)$.  Hence, the condition
\[
\frac{I_t (U_j) \lambda (B(x_j,r_j))}{\lambda (U_j)^2 } < K,
\qquad \text{for all } j
\]
implies that
\[
\lambda (B (x_j, r_j)) \leq K \mathscr{H}_\infty^t (U_j), \qquad
\text{for all } j
\]
but these two conditions are not equivalent. The condition of
H.~Koivusalo and M.~Rams is that
\[
\lambda (B(x_j, r_j)) \leq \phi^s (U_j), \qquad \text{for all }
j,
\]
where the generalised singular value function $\phi^s$ (defined
in \cite{koivusalorams}) is such that $\phi^s (U_j) /
\mathscr{H}_\infty^s (U_j)$ is bounded and bounded away from
zero. Hence the dimension bound of Theorem~\ref{the:main} is
never better than that H.~Koivusalo and M.~Rams, but they
coincide when $U_j$ are for instance balls or ellipsoids, see
below.

An immediate corollary of Theorem~\ref{the:main} is the following
result, following a similar observation in \cite{fengetal}: By
replacing each $U_j$ with an open subset $V_j \subset U_j$, we
get a smaller limsup-set, but the obtained lower bound on the
dimension is nevertheless sometimes larger.

\begin{corollary}
  Let
  \[
  s = \sup \biggl\{\, \sup \biggl\{\, t : \sup_{j} \frac{I_t (V_j)
      \lambda (B(x_j, r_j))}{\lambda (V_j)^2} < \infty \,
    \biggr\} : V_i \subset U_i \, \biggr\}.
  \]
  Then with the notation and assumptions of
  Theorem~\ref{the:main} we have $\dimh E_U \geq s$ and $E_U \in
  \mathscr{G}^s (\mathbbm{T}^d)$.
\end{corollary}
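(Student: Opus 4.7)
The plan is to derive the corollary as a routine consequence of Theorem~\ref{the:main}, combined with the monotonicity of the outer content $\mathscr{M}_\infty^t$. For each choice of open subsets $V_j \subset U_j$, set
\[
s_V = \sup \biggl\{\, t > 0 : \sup_{j} \frac{I_t(V_j) \lambda(B(x_j,r_j))}{\lambda(V_j)^2} < \infty \, \biggr\},
\]
so that $s = \sup s_V$, where the outer supremum is taken over all such choices of $(V_j)$. Since the ball sequence $(B(x_j,r_j))$ is unchanged, its limsup still has full Lebesgue measure, and each $V_j$ is an open subset of $B(x_j,r_j)$. Therefore Theorem~\ref{the:main}, applied to the sequence $(V_j)$, gives
\[
\limsup_{j\to\infty} V_j \in \mathscr{G}^{s_V}(\mathbbm{T}^d).
\]

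Next, I would transfer this large intersection property from $\limsup V_j$ to the larger set $E_U = \limsup U_j$. The inclusion $\limsup V_j \subset E_U$ is immediate from $V_j \subset U_j$, and $E_U$ is itself a $G_\delta$-set since it is a countable intersection of the open sets $\bigcup_{j \geq k} U_j$. Because $\mathscr{M}_\infty^t$ is monotone in its set argument, for every dyadic cube $D$ and every $t < s_V$ we have
\[
\mathscr{M}_\infty^t(E_U \cap D) \geq \mathscr{M}_\infty^t\bigl((\limsup V_j) \cap D\bigr) \geq |D|^t,
\]
and together with the $G_\delta$ property this yields $E_U \in \mathscr{G}^{s_V}(\mathbbm{T}^d)$.

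Finally, taking the supremum over the choices of $(V_j)$, one obtains $E_U \in \mathscr{G}^t(\mathbbm{T}^d)$ for every $t < s$; by the very definition of $\mathscr{G}^s$ this gives $E_U \in \mathscr{G}^s(\mathbbm{T}^d)$, whence $\dimh E_U \geq s$. There is no serious obstacle in this argument: the only point requiring a little care is the observation that the large intersection property passes to $G_\delta$-supersets, which is exactly what bridges the gap between Theorem~\ref{the:main} applied to $\limsup V_j$ and the desired statement about the larger set $E_U$.
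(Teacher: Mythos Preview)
Your argument is correct and is precisely the route the paper has in mind: the paper calls this an ``immediate corollary'' obtained by ``replacing each $U_j$ with an open subset $V_j \subset U_j$'', and the passage from $\limsup V_j$ to the larger $G_\delta$-set $E_U$ via monotonicity of $\mathscr{M}_\infty^t$ is exactly the same step used at the end of the proof of Theorem~\ref{the:main}. You have simply made explicit what the paper leaves as a one-line remark.
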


The classical case is that the sets $U_n$ are balls. In this case
Theorem~\ref{the:main} gives the following corollary. This result has
previously been proved by J.-M.~Aubry and S.~Jaffard
\cite[Proposition~5.4]{aubryjaffard} when $d=1$. Closely related are
also the results by A.~Durand on large intersection properties and
ubiquitous systems \cite{durand1, durand2}, and the so called large
intersection transference principle \cite[Theorem~6.10]{durand3} for
ubiquitous systems. The results by A.~Durand are valid for Hausdorff
measures with more general gauge functions than the basic $r \mapsto
r^t$ considered in this paper.

\begin{corollary} \label{cor:balls}
  Let $(x_j)_{j=1}^\infty$ be a sequence of points in
  $\mathbbm{T}^d$ and suppose that the set $E = \limsup_{j \to
    \infty} B (x_j, r_j)$ has full Lebesgue measure. Then the set
  $E_U = \limsup_{j \to \infty} B (x_j, r_j^{d/s})$, where $0
  \leq s \leq d$, satisfies $\dimh E_U \geq s$ and $E_U \in
  \mathscr{G}^s (\mathbbm{T}^d)$.
\end{corollary}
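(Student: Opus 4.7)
The plan is to apply Theorem~\ref{the:main} directly with the choice $U_j = B(x_j, r_j^{d/s})$, after checking that the supremum in \eqref{eq:formulafors} computes to (at least) $s$. We may assume $0 < s \leq d$, so that $d/s \geq 1$ and in particular $r_j^{d/s} \leq r_j$ eventually (the case $s=0$ is vacuous, since $\mathscr{G}^0(\mathbbm{T}^d)$ just requires a $G_\delta$-set). Since $r_j \to 0$, all balls involved are small enough to behave as Euclidean balls on $\mathbbm{T}^d$, so standard Euclidean computations apply.

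The main step is to compute the quotient in \eqref{eq:formulafors} for $U_j = B(x_j, r_j^{d/s})$ and any $0 < t < s$. Writing $c_d = \lambda(B(0,1))$ and using translation-invariance and the scaling $u \mapsto \rho u$, one has $\lambda(B(x,\rho)) = c_d \rho^d$ and
\[
I_t(B(x,\rho)) \;=\; \rho^{2d - t} \iint_{B(0,1) \times B(0,1)} |u-v|^{-t} \, \mathrm{d} u \, \mathrm{d} v \;=\; C_{d,t}\, \rho^{2d-t},
\]
where $C_{d,t} < \infty$ precisely because $t < d$. Substituting $\rho = r_j^{d/s}$ gives
\[
\frac{I_t(U_j)\, \lambda(B(x_j, r_j))}{\lambda(U_j)^2} \;=\; \frac{C_{d,t}}{c_d}\; r_j^{(2d-t)d/s \,+\, d \,-\, 2d^2/s} \;=\; \frac{C_{d,t}}{c_d}\; r_j^{d(1 - t/s)}.
\]
Since $t < s$, the exponent $d(1 - t/s)$ is strictly positive, and because $r_j \to 0$ these quantities are uniformly bounded in $j$.

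Hence for every $t < s$ the supremum condition in \eqref{eq:formulafors} is satisfied, so the parameter appearing in Theorem~\ref{the:main} is at least $s$. Applying that theorem yields $\dimh E_U \geq s$ and $E_U \in \mathscr{G}^s(\mathbbm{T}^d)$, as required.

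No step poses a real obstacle: the only work is the explicit energy computation for a Euclidean ball and checking the sign of the resulting exponent. The edge case $s = d$ (where $U_j = B(x_j, r_j)$) can either be handled by the same computation or observed directly, since a set of full Lebesgue measure in $\mathbbm{T}^d$ automatically lies in $\mathscr{G}^d(\mathbbm{T}^d)$.
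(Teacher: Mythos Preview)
Your proof is correct and follows exactly the route the paper intends: apply Theorem~\ref{the:main} with $U_j=B(x_j,r_j^{d/s})$ and verify the boundedness condition in \eqref{eq:formulafors} by the scaling identity $I_t(B(x,\rho))=C_{d,t}\rho^{2d-t}$, which yields the exponent $d(1-t/s)>0$ for $t<s$. The paper does not spell out this computation for balls (it simply records the corollary and later remarks that balls are the special case of Corollary~\ref{cor:ellipses}, where the same bound is obtained via Falconer's singular value estimate $I_t(U)\le K\lambda(U)^2/\phi^t(U)$), but your direct scaling argument is equivalent and arguably cleaner in the ball case.
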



It is possible to get a similar result when $U_j$ are ellipsoids.
Suppose that $U$ is an ellipsoid with semi-axes $\lambda_1 \geq
\lambda_2 \geq \ldots \geq \lambda_d$. Define the singular value
function by $\phi^s (U) = \lambda_1 \ldots \lambda_m
\lambda_{m+1}^{s-m}$, where $m$ is such that $m < s \leq m+1$. It
follows from a lemma by Falconer \cite[Lemma~2.2]{falconeraffine}
that $I_t (U) \leq K \lambda(U)^2 / \phi^t (U)$.  We obtain the
following corollary, of which Corollary~\ref{cor:balls} is a
special case. 

\begin{corollary} \label{cor:ellipses}
  Suppose $\limsup_{j \to \infty} B(x_j,r_j)$ has full measure,
  that the sets $U_j \subset B(x_j,r_j)$ are ellipsoids and let
  \[
  s = \sup \bigl\{\, t : \sup_j \lambda (B(x_j,r_j)) / \phi^t
  (U_j) < \infty \,\bigr\}.
  \]
  Then $\dimh \limsup\limits_{j \to \infty} U_j \geq s$ and
  $\limsup\limits_{j \to \infty} U_j \in \mathscr{G}^s
  (\mathbbm{T}^d)$.
\end{corollary}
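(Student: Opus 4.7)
The plan is to reduce this corollary directly to Theorem~\ref{the:main} by inserting the Riesz-energy estimate for ellipsoids cited in the paragraph just preceding the statement. The hypotheses on $\limsup_j B(x_j,r_j)$ and on the containment $U_j \subset B(x_j, r_j)$ are already those required by Theorem~\ref{the:main}, so the only task is to verify that the parameter $s$ of the corollary is dominated by the parameter produced by \eqref{eq:formulafors}.

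First I would fix an arbitrary $t < s$, where $s$ is the supremum in the corollary. By monotonicity of $\phi^t$ in $t$ for ellipsoids with semi-axes at most $1$ (which holds eventually because $r_j \to 0$), one can take such a $t$ for which $\sup_j \lambda(B(x_j, r_j))/\phi^t(U_j) < \infty$. Applying Falconer's lemma \cite[Lemma~2.2]{falconeraffine}, I have a constant $K = K(t,d)$ with
\[
I_t(U_j) \leq K \cdot \frac{\lambda(U_j)^2}{\phi^t(U_j)} \qquad \text{for every } j,
\]
and so
\[
\frac{I_t(U_j)\,\lambda(B(x_j, r_j))}{\lambda(U_j)^2} \leq K \cdot \frac{\lambda(B(x_j, r_j))}{\phi^t(U_j)},
\]
whose supremum over $j$ is finite. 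Hence the condition inside the supremum in \eqref{eq:formulafors} is satisfied at this $t$, so the parameter produced by Theorem~\ref{the:main} is at least as large as the parameter $s$ of the corollary. Invoking Theorem~\ref{the:main} then yields both $\dimh E_U \geq s$ and $E_U \in \mathscr{G}^s(\mathbbm{T}^d)$, as claimed.

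The argument is essentially a one-line substitution after Falconer's lemma is in hand, so there is no real obstacle. The only technicality worth noting is the restriction $0 < t < d$ coming from the Riesz energy. This is harmless because $\phi^d(U) \sim \lambda(U) \leq \lambda(B(x_j, r_j))$ always forces the supremum defining $s$ in the corollary to lie in the admissible interval $[0, d]$, so for any $t < s$ one can carry out the reduction above.
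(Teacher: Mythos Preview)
Your proof is correct and matches the paper's approach exactly: the paper's argument is simply the sentence preceding the corollary, which invokes Falconer's lemma \cite[Lemma~2.2]{falconeraffine} to bound $I_t(U_j) \leq K\lambda(U_j)^2/\phi^t(U_j)$ and then appeals to Theorem~\ref{the:main}. Your additional remarks on the monotonicity of $\phi^t$ and on the restriction $0<t<d$ are harmless refinements that the paper leaves implicit (and in fact the monotonicity is not even needed, since it suffices that the set of admissible $t$ in the corollary is contained in the set of admissible $t$ in \eqref{eq:formulafors}).
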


Note that instead of ellipses, we can let $U_j$ be rectangles,
and Corollary~\ref{cor:ellipses} holds also in this case. In this
case the dimension result has previously been proved by Wang, Wu
and Xu \cite{wangwuxu}, but the result on large intersection
property is new in this case.

\begin{example}
  As an example to Corollary~\ref{cor:ellipses}, consider the set
  \begin{multline*}
    W (\tau) = W(\tau_1, \tau_2, \ldots, \tau_d) = \Bigl\{ \,
    (x_1, x_2, \ldots, x_d) \in \mathbbm{T}^d : \\ \max_j \{
    q^{\tau_j} \lVert q x_j \rVert \} < 1 \text{ for infinitely
      many } q \in \mathbbm{N} \, \Bigr\}.
  \end{multline*}
  Wang, Wu and Xu showed \cite[Corollary~5.1]{wangwuxu} how to
  use their mass transference principle to obtain that
  \[
  \dimh W (\tau) = D(\tau) := \min \biggl\{\, \frac{d + 1 + j \tau_j -
    \sum_{i=1}^j \tau_i}{1 + \tau_j} : 1 \leq j \leq d \,
  \biggr\}
  \]
  provided that $\frac{1}{d} \leq \tau_1 \leq \tau_2 \leq \ldots
  \leq \tau_d$. (The ordering is only important to give the right
  formula.) With aid of Corollary~\ref{cor:ellipses} we obtain
  that $W (\tau) \in \mathscr{G}^s (\mathbb{T}^d)$ with $s = D
  (\tau)$. Hence we may conclude that
  \[
  \dimh \bigcap_{\tau \in T} W(\tau) = \inf_{\tau \in T}
  D(\tau),
  \]
  if $T \subset [\frac{1}{d}, \infty)^d$ is an at most countable set.
\end{example}

\section{Proof of Theorem~\ref{the:main}} \label{sec:proof}

We shall use Riesz energies of sets and measures to prove the
result through Lemma~\ref{lem:frostman}. The $t$-dimensional
Riesz energy of a measure $\mu$ is defined by
\[
I_t (\mu) = \iint |x-y|^{-t} \, \mathrm{d} \mu(x) \mathrm{d} \mu
(y).
\]
Recall that we have defined the $t$-dimensional Riesz energy of a
set $U$ by
\[
I_t (U) = \int_U \int_U |x-y|^{-t} \, \mathrm{d} x \mathrm{d} y.
\]
We will also use the number
\[
J_t (U, V) = \int_U \int_V |x-y|^{-t} \, \mathrm{d} x \mathrm{d}
y.
\]

There will be an opportunity to use Vitali's covering lemma, see
for instance Evans and Gariepy \cite{evansgariepy} for a proof.

\begin{lemma}[Vitali's covering lemma] \label{lem:vitali}
  Let $\{\, B(x_j, r_j) : j \in \mathscr{I} \,\}$ be a collection
  of balls. Then there is a countable set $\mathscr{K} \subset
  \mathscr{I}$ such that $\{\, B(x_j, r_j) : j \in \mathscr{K}
  \,\}$ is a disjoint collection and
  \[
  \bigcup_{j \in \mathscr{I}} B(x_j, r_j) \subset \bigcup_{j \in
    \mathscr{K}} B(x_j, 5 r_j).
  \]
\end{lemma}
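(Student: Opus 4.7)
The plan is the classical greedy selection, choosing disjoint balls in roughly decreasing order of radius, with the factor $5$ appearing from a triangle-inequality comparison between a rejected ball and the larger already-chosen ball it meets. First I would reduce to the case $R := \sup_{j \in \mathscr{I}} r_j < \infty$: if $R = \infty$ I would apply the bounded argument separately on each range $\{\, j : 2^n \leq r_j < 2^{n+1} \,\}$ with $n \in \mathbbm{Z}$, processed from largest to smallest. In the bounded case I partition the index set into dyadic shells
\[
\mathscr{I}_n = \bigl\{\, j \in \mathscr{I} : R / 2^n < r_j \leq R / 2^{n-1} \,\bigr\},
\qquad n = 1, 2, 3, \ldots,
\]
so that any two balls from shells $\mathscr{I}_m$ and $\mathscr{I}_n$ with $m \leq n$ satisfy $r_k > R/2^n \geq r_j / 2$ when $k \in \mathscr{I}_m$ and $j \in \mathscr{I}_n$.

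Next I would construct $\mathscr{K}$ shell by shell. Having selected $\mathscr{K}_1, \ldots, \mathscr{K}_{n-1}$, I let $\mathscr{K}_n$ be a maximal sub-collection of balls of $\mathscr{I}_n$ that is pairwise disjoint and whose members are also disjoint from every ball previously selected. Existence follows from Zorn's lemma applied to the poset of such sub-collections ordered by inclusion; in the ambient space $\mathbbm{T}^d$ one does not even need Zorn, because every ball in $\mathscr{I}_n$ has Lebesgue measure bounded below by a constant depending only on $n$, and $\lambda(\mathbbm{T}^d) = 1$, so any disjoint sub-collection of $\mathscr{I}_n$ is finite and $\mathscr{K}_n$ can be obtained by a transparent greedy enumeration. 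Setting $\mathscr{K} = \bigcup_{n \geq 1} \mathscr{K}_n$ thus yields a countable disjoint sub-collection of the original family.

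Finally I would verify the $5r$-covering. Fix $j \in \mathscr{I}$, and let $n$ be the unique shell with $j \in \mathscr{I}_n$. If $j \in \mathscr{K}_n$ there is nothing to prove; otherwise, by maximality of $\mathscr{K}_n$ (relative to the earlier selections), there exists $k \in \mathscr{K}_1 \cup \cdots \cup \mathscr{K}_n$ such that $B(x_j, r_j) \cap B(x_k, r_k) \neq \emptyset$. By the size comparison from the first paragraph, $r_j < 2 r_k$, and for any $y \in B(x_j, r_j)$ the triangle inequality gives
\[
|y - x_k| \leq |y - x_j| + |x_j - x_k| < r_j + (r_j + r_k) < 2 r_j + r_k < 5 r_k,
\]
so $B(x_j, r_j) \subset B(x_k, 5 r_k)$. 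Summing over $j$ yields the desired inclusion.

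The only real obstacle is the bookkeeping for the reduction to bounded radii and making the maximal selection on each shell rigorous; once the dyadic shells are in place and processed in the correct order, the factor $5$ is forced by the elementary calculation above and nothing else in the argument is subtle.
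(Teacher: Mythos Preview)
The paper does not give its own proof of this lemma; it simply cites Evans and Gariepy \cite{evansgariepy}. Your argument is essentially the standard textbook proof found there: partition into dyadic radius shells, greedily extract a maximal disjoint subfamily shell by shell from large radii to small, and use the size comparison $r_j < 2 r_k$ together with the triangle inequality to get the factor $5$. So there is nothing to compare --- your approach is the expected one.

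One small caveat: your handling of the case $R = \sup_j r_j = \infty$ is not quite right as written, since ``processed from largest to smallest'' presupposes a largest shell, and indeed the lemma as stated can fail for unbounded radii in $\mathbbm{R}^d$ (take concentric balls $B(0,n)$, $n \in \mathbbm{N}$). This does not matter here: in $\mathbbm{T}^d$ any ball of sufficiently large radius is all of $\mathbbm{T}^d$, and in the paper's only application the radii satisfy $r_j \to 0$, so the bounded case is all that is needed.
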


We are now ready to give the proof of Theorem~\ref{the:main}. For
technical reasons in the proof we will assume that there is a
number $c \in (0,1)$ such that $U_j \in B(x_j, c r_j)$. If this
is not the case, we can just replace every ball $B(x_j, r_j)$ by
a ball $B(x_j, 2 r_j)$ with twice as large diameter, which does
not influence any of the assumptions in the
theorem.\footnote{Note that it is therefore not necessary to
  assume that $U_j \subset B(x_j, r_j)$ in
  Theorem~\ref{the:main}; It is enough to assume that $U_j
  \subset B(x_j, \kappa r_j)$ for some $\kappa$ that does not
  depend on $j$.}

Since $E$ has full Lebesgue measure, for every $n$ there exists a
number $m_n$ such that the set
\[
E_n = \bigcup_{j=n}^{m_n} B (x_j, r_j)
\]
has Lebesgue measure $\lambda (E_n) > 1 - 1/n$.

For each $n$, we let $\mathscr{I}_n = \{n, n+1, \ldots,
m_n\}$. By Vitali's covering lemma, Lemma~\ref{lem:vitali}, there
is a set $\mathscr{K}_n \subset \mathscr{I}_n$ such that the
balls $B (x_j, r_j)$, $j \in \mathscr{K}_n$, are disjoint and
such that
\[
E_n \subset \bigcup_{j \in \mathscr{K}_n} B (x_j, 5 r_j).
\]
Note that since the balls are disjoint, we have
\begin{equation} \label{eq:sumofmeasure}
  \sum_{j \in \mathscr{K}_n} \lambda(B(x_j,r_j)) < 1.
\end{equation}

We put
\[
\tilde{E}_n = \bigcup_{j \in \mathscr{K}_n} B (x_j, r_j) \subset
E_n.
\]
Then $\lambda (\tilde{E}_n) < 1$ and
\begin{align*}
  \lambda (\tilde{E}_n) = \sum_{j \in \mathscr{K}_n} \lambda
  (B(x_j, r_j)) &= \sum_{j \in \mathscr{K}_n} 5^{-d} \lambda
  (B(x_j, 5 r_j)) \\ &\geq 5^{-d} \lambda (E_n) > 5^{-d} \Bigl( 1
  - \frac{1}{n} \Bigr).
\end{align*}

Define measures $\nu_n$ with support in the closure of
$\tilde{E}_n$ by
\[
\nu_n = \frac{1}{\lambda (\tilde{E}_n)} \lambda|_{\tilde{E}_n}.
\]
Then $\nu_n (\mathbbm{T}^d) = 1$ and $\frac{\mathrm{d}
  \nu_n}{\mathrm{d} \lambda} \leq \bigl(1 - \frac{1}{n}
\bigr)^{-1} \cdot 5 ^d$ which implies that
\[
\limsup_{n \to \infty} \frac{\nu_n (B)}{\lambda (B)} \leq 5^d
\]
for any ball $B$.  Letting $\mathscr{K}_n (B) = \{\, j \in
\mathscr{K}_n : x_j \in B \,\}$ we have
\begin{align*}
  \liminf_{n \to \infty} \frac{\nu_n (B)}{\lambda (B)} &\geq
  \liminf_{n \to \infty} \frac{\lambda (B \cap
    \tilde{E}_n)}{\lambda (B)} \geq \liminf_{n \to \infty}
  \frac{\displaystyle \sum_{j \in \mathscr{K}_n (B)} \lambda
    (B(x_j,r_j))}{\lambda (B)} \\ & = \liminf_{n \to \infty}
  \frac{\displaystyle \sum_{j \in \mathscr{K}_n (B)} 5^{-d}
    \lambda (B(x_j,5 r_j))}{\lambda (B)} \geq \liminf_{n \to
    \infty} 5^{-d} \frac{\lambda (B) - \frac{1}{n}}{\lambda (B)}.
\end{align*}
Hence we have obtained
\begin{equation} \label{eq:densityboundnu}
  5^{-d} \leq \liminf_{n \to \infty} \frac{\nu_n (B)}{\lambda
    (B)} \leq \limsup_{n \to \infty} \frac{\nu_n (B)}{\lambda
    (B)} \leq 5^d,
\end{equation}
for any ball $B$.

Since $\frac{\mathrm{d} \nu_n}{\mathrm{d} \lambda} \leq 2 \cdot 5
^d$ if $n \geq 2$, we have
\[
I_t (\nu_n) \leq 4 \cdot 5^{2d} \iint |x-y|^{-t} \, \mathrm{d} x
\mathrm{d} y = 4 \cdot 5^{2d} I_t (\lambda).
\]

We define the set $V_{n}$ by
\[
V_{n} = \bigcup_{j \in \mathscr{K}_n} U_j.
\]
The set $V_{n}$ is a subset of $\tilde{E}_n$, since $U_j \subset
B(x_j, r_j)$. We define new measures $\mu_n$ in the following
way. For each $j \in \mathscr{K}_n$, the mass of $\nu_n$ in $B
(x_j, r_j)$ is moved into $U_j$ and distributed uniformly. More
precisely, $\mu_n$ is defined by
\[
\mu_n = \sum_{j \in \mathscr{K}_n} \nu_n (B (x_j, r_j))
\frac{\lambda|_{U_j}}{\lambda (U_j)} =
\frac{1}{\lambda(\tilde{E}_n)} \sum_{j \in \mathscr{K}_n}
\frac{\lambda (B(x_j, r_j))}{\lambda (U_j)} \lambda|_{U_j}.
\]

By \eqref{eq:densityboundnu}, and the fact that $r_j \to 0$, we
immediately obtain that
\begin{equation} \label{eq:densityboundmu}
  5^{-d} \leq \liminf_{n \to \infty} \frac{\mu_n (B)}{\lambda
    (B)} \leq \limsup_{n \to \infty} \frac{\mu_n (B)}{\lambda
    (B)} \leq 5^d,
\end{equation}
holds for any ball $B$.  (Replacing $B(x_j, r_j)$ by $U_j$ moves
mass and could potentially move mass inside or outside of
$B$. Since $r_j \to 0$ this could only happen when $x_j$ is close
to the boundary, which can only happen for an increasingly small
proportion of $x_j$ as $n \to \infty$, since $r_j \to 0$.)

We shall now estimate $I_t (\mu_n)$.  From the definition of
$\mu_n$ follows immediately that
\[
I_t (\mu_n) = \sum_{j,k \in \mathscr{K}_n} \int_{U_k} \int_{U_j}
|x-y|^{-t} \, \mathrm{d} \mu_n (x) \mathrm{d} \mu_n (y).
\]
We consider two cases. Below, we write $B_j$ instead of $B(x_j,
r_j)$.

If $k = j$, then
\begin{multline*}
  \int_{U_k} \int_{U_j} |x-y|^{-t} \, \mathrm{d} \mu_n (x)
  \mathrm{d} \mu_n (y) = I_t (U_j) \biggl( \frac{\mu_n
    (U_j)}{\lambda (U_j)} \biggr)^2 \\ = I_t (U_j) \biggl(
  \frac{\nu_n (B_j)}{\lambda (U_j)} \biggr)^2 \leq 4 \cdot 5^{2d}
  I_t (U_j) \biggl( \frac{\lambda (B_j)}{\lambda (U_j)}
  \biggr)^2.
\end{multline*}

If $k \neq j$, then $B (x_j,r_j)$ and $B (x_k, r_k)$ are
disjoint. Hence $B (x_j, c r_j)$ and $B (x_k, c r_k)$ are
separated by at least a distance $(1-c) |x_j - x_k|$, and since
$U_k \subset B (x_k, c r_k)$ and $U_j \subset B (x_j, c r_j)$, we
have $d (U_k, U_j) \geq (1-c) |x_j - x_k|$.  We get the
inequalities
\begin{align*}
  \int_{U_k} \int_{U_j} |x-y|^{-t} \, \mathrm{d} \mu_n (x)
  \mathrm{d} \mu_n (y) &\leq d(U_k, U_j)^{-t} \mu_n (U_k) \mu_n
  (U_j)\\ &\leq (1-c)^{-t}|x_k - x_j|^{-t} \mu_n (U_k) \mu_n
  (U_j) \\ &= (1-c)^{-t} |x_k - x_j|^{-t} \nu_n (B_k) \nu_n (B_j)
  \\ &\leq 4 \cdot 5^{2d} (1-c)^{-t} |x_k - x_j|^{-t} \lambda
  (B_k) \lambda (B_j) \\ &\leq C_{t,d} J_t (B_k, B_j),
\end{align*}
where $C_{t,d}$ is a constant which only depends on $t$ and $d$.

We may now conclude that there is a constant $C$ such that $I_t
(\mu_n) < C$ for all $n$, provided
\begin{equation} \label{eq:condition}
  \frac{I_t (U_j) \lambda(B_j)}{\lambda (U_j)^2} < K
\end{equation}
holds for all $j$ and some constant $K$.
Indeed, it follows from the inequalities above that
\begin{align*}
  I_t (\mu_n) &= \sum_{j,k \in \mathscr{K}_n} \int_{U_k}
  \int_{U_j} |x-y|^{-t} \, \mathrm{d} \mu_n (x) \mathrm{d} \mu_n
  (y) \nonumber \\ & \leq 4 \cdot 5^{2d} \sum_{j \in
    \mathscr{K}_n} I_t (U_j) \biggl( \frac{\lambda (B_j)}{\lambda
    (U_j)} \biggr)^2 + \sum_{j,k \in \mathscr{K}_n} C_{t,d} J_t
  (B_k, B_j) \nonumber \\ & \leq 4 \cdot 5^{2d} \sum_{j \in
    \mathscr{K}_n} I_t (U_j) \biggl( \frac{\lambda (B_j)}{\lambda
    (U_j)} \biggr)^2 + C_{t,d} I_t (\nu_n) \nonumber \\ & \leq 4
  \cdot 5^{2d} \sum_{j \in \mathscr{K}_n} I_t (U_j) \frac{\lambda
    (B_j)}{\lambda (U_j)^2} \cdot \lambda (B_j) + 4 \cdot 5^{2d}
  C_{t,d} I_t (\lambda).
\end{align*}
Hence, if \eqref{eq:condition} holds, we may use
\eqref{eq:sumofmeasure} to conclude that
\begin{equation} \label{eq:energybound2}
  T_t (\mu_n) \leq 4 \cdot 5^{2d} C_d K + 4 \cdot 5^{2d} C_{t,d}
  I_t (\lambda),
\end{equation}
where the constants $C_d$, $K$ and $C_{t,d}$ do not depend on
$n$.

Finally, by \eqref{eq:densityboundmu} and
\eqref{eq:energybound2}, the assumptions in
Lemma~\ref{lem:frostman} are satisfied for the measures $\mu_n$
provided \eqref{eq:condition} is satisfied.
Lemma~\ref{lem:frostman} then implies that $\limsup_{n \to
  \infty} V_{n} \in \mathscr{G}^t (\mathbbm{T}^d)$ holds for all
$t$ which satisfies \eqref{eq:condition}. Since $\limsup_{n \to
  \infty} V_{n} \subset \limsup_{j \to \infty} U_j$, this implies
that $\limsup_{j \to \infty} U_j \in \mathscr{G}^s
(\mathbbm{T}^d)$ when $s$ is the supremum of the set of $t$ such
that \eqref{eq:condition} holds for all $j$.

\section{Appendix: Proof of Lemma~\ref{lem:frostman}} \label{sec:appendix}

Here we give a proof of Lemma~\ref{lem:frostman} following the
ideas in the papers \cite{perssonreeve} and \cite{persson2}.

We start with a lemma.

\begin{lemma} \label{lem:energylimit}
  Let $\mu_n$ and $C$ be as in Lemma~\ref{lem:frostman}, let $D$ be a
  dyadic cube and $t < s$. Then
  \[
  \limsup_{n \to \infty} \iint_{D \times D} |x - y|^{-t} \,
  \mathrm{d} \mu_n (x) \mathrm{d} \mu_n (y) \leq C^2 \iint_{D
    \times D} |x - y|^{-t} \, \mathrm{d}x \mathrm{dy}.
  \]
\end{lemma}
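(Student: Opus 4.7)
The plan is to decompose the double integral at a small threshold $\delta > 0$, handling the near-diagonal piece with the Riesz $s$-energy assumption \eqref{eq:energybound} and the off-diagonal piece with the one-sided density bound in \eqref{eq:densitybounds}. Concretely, I would write
\[
\iint_{D \times D} |x-y|^{-t} \, \mathrm{d}\mu_n(x) \mathrm{d}\mu_n(y) = A_n(\delta) + B_n(\delta),
\]
where $A_n(\delta)$ is the integral over $\{(x,y) \in D \times D : |x-y| < \delta\}$ and $B_n(\delta)$ is the integral over its complement inside $D \times D$.

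For $A_n(\delta)$, I would exploit that $s > t$: on the near-diagonal region $|x-y|^{-t} \leq \delta^{s-t}|x-y|^{-s}$, so by \eqref{eq:energybound}, $A_n(\delta) \leq C\delta^{s-t}$ uniformly in $n$, which tends to $0$ as $\delta \to 0$.

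For $B_n(\delta)$, the integrand is bounded and uniformly continuous on the relevant region. I would approximate it pointwise from above by a step function of the form $g_\rho(x,y) = \sum_{i,j} a_{ij}\mathbbm{1}_{E_i}(x)\mathbbm{1}_{E_j}(y)$, where $E_1, \ldots, E_N$ is a partition of $D$ into pieces of diameter at most $\rho$ with each $E_i$ contained in a ball $B_i$, and $a_{ij}$ is the supremum of $|x-y|^{-t}$ over $(B_i \times B_j) \cap \{|x-y| \geq \delta\}$ (set to $0$ when this intersection is empty). Applying the upper density bound $\limsup_n \mu_n(B_i)/\lambda(B_i) \leq C$ from \eqref{eq:densitybounds} to each of the finitely many product terms yields
\[
\limsup_{n \to \infty} B_n(\delta) \leq \limsup_{n \to \infty} \iint g_\rho \, \mathrm{d}\mu_n \mathrm{d}\mu_n \leq C^2 \iint g_\rho \, \mathrm{d}x \mathrm{d}y,
\]
and uniform continuity on $\{|x-y| \geq \delta\}$ makes the right-hand side converge, as $\rho \to 0$, to $C^2 \iint_{D \times D,\, |x-y| \geq \delta} |x-y|^{-t} \, \mathrm{d}x \mathrm{d}y$. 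Combining the two estimates and letting $\delta \to 0$, using monotone convergence for the Lebesgue double integral (which is finite since $t < d$), yields the claim.

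The main obstacle is the approximation step for $B_n(\delta)$: \eqref{eq:densitybounds} is only a one-sided inequality on balls and does not imply weak convergence of $\mu_n$ to any limit measure, so I cannot simply move $\limsup$ through a continuous test integrand. The kernel must instead be dominated \emph{pointwise} by a finite linear combination of ball-indicators, so that the upper density bound can be applied term-by-term before passing to the limit in $n$.
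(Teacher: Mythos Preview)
Your decomposition into a near-diagonal piece (controlled by the $s$-energy bound, using $t<s$) and an off-diagonal piece (bounded continuous kernel, controlled by the density assumption) is exactly the paper's strategy: the paper writes the split as $\iint_{M_m}+\iint\min\{|x-y|^{-t},m^{t/s}\}$, which is your $A_n(\delta)+B_n(\delta)$ with $\delta=m^{-1/s}$, and handles the two pieces in the same way. Your tail estimate $A_n(\delta)\le C\,\delta^{s-t}$ is correct and is precisely the content of the bound the paper cites from \cite{perssonreeve}.

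Where you go beyond the paper is in making the step ``density bound on balls $\Rightarrow$ bound on integrals of bounded continuous kernels'' explicit, and here your step-function argument has a small gap. From $\mu_n(E_i)\le\mu_n(B_i)$ and $\limsup_n\mu_n(B_i)\le C\lambda(B_i)$ you only get
\[
\limsup_{n\to\infty}\sum_{i,j}a_{ij}\,\mu_n(E_i)\mu_n(E_j)\;\le\;C^2\sum_{i,j}a_{ij}\,\lambda(B_i)\lambda(B_j),
\]
whereas $C^2\iint g_\rho\,\mathrm{d}x\,\mathrm{d}y=C^2\sum_{i,j}a_{ij}\,\lambda(E_i)\lambda(E_j)$; since $E_i\subsetneq B_i$ you pick up the extra factor $(\lambda(B_i)/\lambda(E_i))^2$, a fixed dimensional constant if the $E_i$ are cubes and the $B_i$ their circumscribed balls. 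Thus your argument proves the lemma with $C^2$ replaced by $c_d\,C^2$. This is harmless for the only application (Lemma~\ref{lem:frostman} needs some constant, not the sharp one), but it is weaker than the stated inequality. To recover the sharp $C^2$ one usually argues by weak-$*$ compactness: any subsequential weak limit $\mu$ of $(\mu_n)$ satisfies $\mu(B)\le C\lambda(B)$ for every ball, hence $\mu\le C\lambda$ by Besicovitch differentiation, whence $\limsup_n\iint f\,\mathrm{d}\mu_n\,\mathrm{d}\mu_n\le C^2\iint f\,\mathrm{d}x\,\mathrm{d}y$ for every continuous $f\ge 0$. The paper suppresses this step entirely, merely asserting the bound ``since the truncated kernel is continuous''.
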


\begin{proof}
  Let $M_m = \{\, (x, y) \in D \times D : |x-y|^{-s} > m
  \,\}$. Then
  \[
  \iint_{M_m} |x-y|^{-t} \, \mathrm{d} \mu_n (x) \mathrm{d} \mu_n
  (y) \leq C \frac{s}{s - t} m^{t/s - 1}
  \]
  holds for all $n$ with $C$ as in Lemma~\ref{lem:frostman}
  \cite[Lemma~2.2]{perssonreeve}. Let $\varepsilon > 0$ and take
  $m$ so large that
  \[
  \iint_{M_m} |x-y|^{-t} \, \mathrm{d} \mu_n (x) \mathrm{d} \mu_n
  (y) < \varepsilon,
  \]
  holds for all $n$. Then
  \begin{multline*}
    \iint_{D \times D} |x - y|^{-t} \, \mathrm{d} \mu_n (x)
    \mathrm{d} \mu_n (y) \\ \leq \varepsilon + \iint_{D \times D}
    \min \{ |x - y|^{-t}, m^{t/s} \} \, \mathrm{d} \mu_n (x)
    \mathrm{d} \mu_n (y).
  \end{multline*}
  Since $(x,y) \mapsto \min \{ |x - y|^{-t}, m^{t/s} \}$ is
  continuous, we have by \eqref{eq:densitybounds}
  \begin{multline*}
    \limsup_{n\to \infty} \iint_{D \times D} \min \{ |x -
    y|^{-t}, m^{t/s} \} \, \mathrm{d} \mu_n (x) \mathrm{d} \mu_n
    (y) \\ \leq C^2 \iint_{D \times D} \min \{ |x - y|^{-t},
    m^{t/s} \} \, \mathrm{d}x \mathrm{d} y.
  \end{multline*}
  As $\varepsilon > 0$ is arbitrary, this proves the lemma.
\end{proof}

\begin{proof}[Proof of Lemma~\ref{lem:frostman}]
  Let $D$ be a fixed dyadic cube and take $t < s$. Then, if $n$
  is sufficiently large $\mu_n (D) > 0$, and the assumptions
  imply that
  \[
  \nu_n (A) = \int_{A \cap D} \biggl( \int_D |x-y|^{-t} \,
  \mathrm{d} \mu_n (x) \biggr)^{-1} \, \mathrm{d} \mu_n (y)
  \]
  defines a non-zero measure which is absolutely continuous with
  respect to $\mu_n$. The measure $\nu_n$ satisfies $\nu_n (D) =
  \nu_n (D \cap E_n)$.

  By Jensen's inequality, the measure $\nu_n$ satisfies
  \begin{align*}
    \nu_n (D) &= \int_D \biggl( \int_D |x-y|^{-t} \, \mathrm{d}
    \mu_n (x) \biggr)^{-1} \, \mathrm{d} \mu_n (y) \\ &= \int_D
    \biggl( \int_D |x-y|^{-t} \, \frac{\mathrm{d} \mu_n
      (x)}{\mu_n (D)} \biggr)^{-1} \, \frac{\mathrm{d} \mu_n
      (y)}{\mu_n (D)} \\ &\geq \biggl( \iint_{D \times D}
    |x-y|^{-t} \, \frac{\mathrm{d} \mu_n (x)}{\mu_n (D)} \,
    \frac{\mathrm{d} \mu_n (y)}{\mu_n (D)} \biggr)^{-1}.
  \end{align*}
  By Lemma~\ref{lem:energylimit}, we get that
  \begin{align*}
    \nu_n (D) &\geq \frac{C^{-2} \mu_n (D)^2}{2 \lambda (D)^2}
    \biggl( \iint_{D \times D} |x-y|^{-t} \, \frac{\mathrm{d}
      \lambda (x)}{\lambda (D)} \, \frac{\mathrm{d} \lambda
      (y)}{\lambda (D)} \biggr)^{-1} \\ &= \frac{C^{-2} \mu_n
      (D)^2}{\lambda (D)^2} \frac{\lambda (D)^2}{I_t (D)},
  \end{align*}
  if $n$ is sufficiently large. Hence, for large enough $n$, we
  have
  \[
  \nu_n (D) \geq \frac{C^{-2} \mu_n (D)^2}{2 \lambda (D)^2}
  \frac{\lambda (D)^2}{I_t (D)} \geq \frac{C^{-4}}{4}
  \frac{\lambda (D)^2}{I_t (D)} \geq C_0 |D|^t,
  \]
  for some constant $C_0$ which depends only on $d$ and $t$.
  
  For $U \subset D$ we have again by Jensen's inequality that
  \begin{align*}
    \nu_n (U) &= \int_{U} \biggl( \int_D |x-y|^{-t} \, \mathrm{d}
    \mu_n (x) \biggr)^{-1} \, \mathrm{d} \mu_n (y) \\ & \leq
    \int_U \biggl( \int_U |x-y|^{-t} \, \frac{\mathrm{d} \mu_n
      (x)}{\mu_n (U)} \biggr)^{-1} \, \frac{\mathrm{d} \mu_n
      (y)}{\mu_n (U)} \\ & \leq \int_U \int_U |x-y|^t \,
    \frac{\mathrm{d} \mu_n (x)}{\mu_n (U)} \frac{\mathrm{d} \mu_n
      (y)}{\mu_n (U)} \leq |U|^t.
  \end{align*}

  Suppose now that $\{D_k\}$ is a disjoint cover of $E_n \cap D$
  by dyadic cubes. Then if $n$ is large, we have by the estimates
  above that
  \[
  \sum_{k} |D_k|^t \geq \sum_{k} \nu_n (D_k) = \nu_n (D \cap E_n)
  = \nu_n (D) \geq C_0 |D|^t.
  \]
  This proves that $\mathscr{M}_t^\infty (E_n \cap D) \geq C_0
  |D|^t$ when $n$ is large and hence
  \[
  \liminf_{n \to \infty} \mathscr{M}_t^\infty (E_n \cap D) \geq
  C_0 |D|^t.
  \]
  By \cite{falconer2}, this implies that $\limsup E_n \in
  \mathscr{G}^s (\mathbbm{T}^d)$.
\end{proof}

\end{document}